\newtheorem{theorem}{Theorem}[section]
\newtheorem{lemma}[theorem]{Lemma}
\newtheorem{proposition}[theorem]{Proposition}
\newcommand{\ch}{\operatorname{char}}
\newcommand{\gr}{\operatorname{gr}}
\newcommand{\Sym}{\operatorname{Sym}}
\newcommand{\FM}{\operatorname{FM}}
\newcommand{\FG}{\operatorname{FG}}
\newcommand{\GKdim}{\operatorname{GKdim}}
\newcommand{\clKdim}{\operatorname{clKdim}}
\newenvironment{proof}{\par\noindent{\bf Proof.}}{$\qed$\par\bigskip}
\newcommand{\qed}{\enspace\vrule  height6pt  width4pt  depth2pt}
\begin{document}

\title{Group algebras and semigroup algebras defined by permutation relations of fixed length}

\author{Ferran Ced\'o\footnote{Research partially supported by a grant of MICIIN (Spain)
MTM2011-28992-C02-01.} \and Eric Jespers\footnote{Research supported
in part by Onderzoeksraad of Vrije Universiteit Brussel and Fonds
voor Wetenschappelijk Onderzoek (Belgium).} \and Georg Klein}
\date{}
\maketitle

\begin{abstract}
Let $H$ be a subgroup of $\Sym_n$, the symmetric group of degree
$n$. For a fixed integer $l \geq 2$, the group $G$ presented
with generators $x_1, x_2, \ldots ,x_n$ and with relations
$x_{i_1}x_{i_2}\cdots x_{i_l} =x_{\sigma (i_1)} x_{\sigma (i_2)}
\cdots  x_{\sigma (i_l)}$, where $\sigma$ runs through $H$, is
considered. It is shown that $G$ has a free subgroup  of finite
index. For a field $K$, properties of the algebra $K[G]$ are
derived. In particular, the Jacobson radical $\mathcal{J}(K[G])$ is
always nilpotent, and in many cases the algebra $K[G]$ is
semiprimitive. Results on the growth and the Gelfand-Kirillov
dimension of $K[G]$ are given. Further properties of the semigroup
$S$ and the semigroup algebra $K[S]$ with the same presentation are
obtained, in case $S$ is cancellative. The Jacobson radical is
nilpotent in this case as well, and sufficient conditions for the
algebra to be semiprimitive are given.
\end{abstract}

\noindent {\it Keywords:} group ring, semigroup ring,
finitely presented, group, semigroup, Jacobson radical,
semiprimitive,  primitive. \\ {\it Mathematics
Subject Classification:} Primary 16S15, 16S34, 16S36,
20M05; Secondary  20M25, 16N20.
\section{Introduction}

Motivated by a general interest in finitely presented algebras with
homogeneous defining relations, in \cite{SymLengthL}, the authors
initiated the study of  the monoid algebras $K[S_{n,l}(H)]$,
where $l\geq 2$ is an integer and $H$ is a subgroup of $\Sym_n$ and
$S_{n,l}(H)$ is the monoid presented with generators $x_1,x_2,\dots
, x_n$ and with relations $x_{i_1}x_{i_2}\cdots x_{i_l} =x_{\sigma
(i_1)} x_{\sigma (i_2)} \cdots
 x_{\sigma (i_l)}$, for all $i_1,i_2,\dots ,i_l\in \{ 1,2,\dots ,n\}$ and all $\sigma\in
H$. In the present article, we obtain further results on algebras of
this type. Let $G_{n,l}(H)$ be the universal group of $S_{n,l}(H)$,
that is the group presented with the ``same'' generators and
relations. Recall that a subgroup $H$ of $\Sym_n$ is semi-regular if
the stabilizer of $i$ in $H$ is trivial for all $i\in\{ 1,2,\dots
,n\}$. In \cite[Theorems~2.3 and~2.5]{SymLengthL} it is shown that
$S_{n,l}(H)$ is cancellative if and only if $H$ is semi-regular and
abelian, and in this case $S_{n,l}(H)$  embeds in $G_{n,l}(H)$. For simplicity, we
will denote $S_{n,l}(H)$ by $S$, and $G_{n,l}(H)$ by $G$.

In Section~\ref{groupss}, we prove  that, for arbitrary $H$, the group $G$ has a free subgroup
of finite index. A similar result for $S$ was obtained in
\cite{SymLengthL}. In Section~\ref{algebrass}, we show that the
Jacobson radical, $\mathcal{J}(K[G])$, is nilpotent. We also prove
that if $H$ is transitive, then $K[G]$ is a Noetherian PI-algebra, and
the Gelfand-Kirillov dimension and the classical Krull dimension are
both equal to $1$. In Section~\ref{nilpotentt}, we show that if
$H$ is semi-regular and abelian, then $\mathcal{J}(K[S])$ is
nilpotent of index at most $|G/M|^{2}$, where $M$ is a normal
subgroup of finite index of $G$. This is a generalization of
\cite[Theorem~3.1]{SymLengthL}, in that the assumption that $H$ is
transitive is removed.

\section{Groups $G_{n,l}(H)$ defined by permutation relations}\label{groupss}
For a subgroup $H$ of $\Sym_n$  and an integer $l\geq 2$, let
$S_{n,l}(H)=\langle x_1,x_2, \ldots , x_n \mid
 x_{i_1}x_{i_2}\cdots x_{i_l} =x_{\sigma (i_1)} x_{\sigma (i_2)} \cdots  x_{\sigma (i_l)}, \;
 \sigma \in H, \;   i_1,\ldots ,i_l \in  \{ 1, \ldots , n \}
\rangle $. We consider the universal group $G_{n,l}(H)$ with the
same presentation as $S_{n,l}(H)$. In \cite{SymLengthL}, it was
shown that if $H$ is not abelian, then $S_{n,l}(H)$ is not embedded
 in $G_{n,l}(H)$.
\begin{lemma}\label{rewrite1}
In $G_{n,l}(H)$ the following equalities hold for an integer $ u
\geq 1$:
\begin{equation*}
\hspace{-10 pt}
\begin{array}{r@{}l}
x_{j_1}x_{j_2} \cdots x_{j_u}x_{k_u}^{-1}x_{k_{u-1}}^{-1} \cdots x_{k_1}^{-1}&{}=x_{\sigma(j_1)}x_{\sigma(j_2)}
\cdots x_{\sigma(j_u)}x_{\sigma(k_u)}^{-1}x_{\sigma(k_{u-1})}^{-1} \cdots x_{\sigma(k_1)}^{-1}, \vspace{3 pt} \\
 x^{-1}_{j_1}x^{-1}_{j_2} \cdots x^{-1}_{j_u}x_{k_u}x_{k_{u-1}} \cdots x_{k_1}&{}=x_{\sigma(j_1)}^{-1}x_{\sigma(j_2)}^{-1}
\cdots x_{\sigma(j_u)}^{-1}x_{\sigma(k_u)}x_{\sigma(k_{u-1})} \cdots x_{\sigma(k_1)},
\end{array}
\end{equation*}
where $j_1,j_2, \ldots , j_u, k_u, k_{u-1}, \ldots , k_1 \in \{ 1, 2, \ldots , n \}$.
\end{lemma}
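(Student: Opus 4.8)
The plan is to reduce both identities to a single elementary observation: in $G_{n,l}(H)$, a \emph{positive} word whose length is a multiple of $l$ is invariant under applying $\sigma$ to all of its indices. Indeed, given $c_1,\dots,c_{ml}\in\{1,\dots,n\}$, one groups $x_{c_1}x_{c_2}\cdots x_{c_{ml}}$ into $m$ consecutive factors of length $l$ and applies the defining relation (for the given $\sigma$) to each factor, obtaining
\[
x_{c_1}x_{c_2}\cdots x_{c_{ml}}=x_{\sigma(c_1)}x_{\sigma(c_2)}\cdots x_{\sigma(c_{ml})}
\]
in $G_{n,l}(H)$; call this $(\ast)$. (Note that this fails for positive words whose length is not a multiple of $l$, so the balanced shape of the words in the statement is exactly what makes the lemma work.)

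Now fix $u\ge 1$ and choose $t\in\{0,1,\dots,l-1\}$ with $l\mid u+t$, say $u+t=ml$ with $m\ge 1$. In the free group on $x_1,\dots,x_n$ there is the trivial identity
\[
x_{j_1}\cdots x_{j_u}\,x_{k_u}^{-1}\cdots x_{k_1}^{-1}
=\bigl(x_{j_1}\cdots x_{j_u}x_1^{\,t}\bigr)\bigl(x_{k_1}\cdots x_{k_u}x_1^{\,t}\bigr)^{-1},
\]
the inserted $x_1^{\,t}$ and $x_1^{-t}$ simply cancelling. Both $x_{j_1}\cdots x_{j_u}x_1^{\,t}$ and $x_{k_1}\cdots x_{k_u}x_1^{\,t}$ are positive words of length $ml$, so $(\ast)$ rewrites them, in $G_{n,l}(H)$, as $x_{\sigma(j_1)}\cdots x_{\sigma(j_u)}x_{\sigma(1)}^{\,t}$ and $x_{\sigma(k_1)}\cdots x_{\sigma(k_u)}x_{\sigma(1)}^{\,t}$ respectively. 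Substituting these into the displayed identity and cancelling $x_{\sigma(1)}^{\,t}x_{\sigma(1)}^{-t}$ yields the first asserted equality. The second equality is proved in the same way, starting from the free-group identity
\[
x_{j_1}^{-1}\cdots x_{j_u}^{-1}\,x_{k_u}\cdots x_{k_1}
=\bigl(x_1^{\,t}x_{j_u}\cdots x_{j_1}\bigr)^{-1}\bigl(x_1^{\,t}x_{k_u}\cdots x_{k_1}\bigr)
\]
and applying $(\ast)$ to the two positive words of length $ml$; alternatively it follows from the first by the anti-automorphism of $G_{n,l}(H)$ that reverses words, which carries the defining relation for the tuple $(i_1,\dots,i_l)$ and $\sigma$ onto the one for $(i_l,\dots,i_1)$ and $\sigma$.

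There is no serious obstacle here: the whole content lies in $(\ast)$ together with the remark that a balanced word of the stated form can be padded out to a product of length-$l$ positive blocks. The only thing to keep in mind is that one should \emph{not} attempt an induction on $u$ — that gets tangled, since arbitrary positive words are not $\sigma$-invariant — but instead pass to the free group and insert a cancelling power of a fixed generator so that $(\ast)$ applies directly to both halves of the word.
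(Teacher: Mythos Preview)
Your proof is correct and follows essentially the same approach as the paper: both pad the positive halves of the word with a power of $x_1$ so that their lengths become multiples of $l$, apply the observation $(\ast)$ that length-$ml$ positive words are $\sigma$-invariant, and then cancel the padding. Your packaging via the free-group identity $x_{j_1}\cdots x_{j_u}x_{k_u}^{-1}\cdots x_{k_1}^{-1}=(x_{j_1}\cdots x_{j_u}x_1^{t})(x_{k_1}\cdots x_{k_u}x_1^{t})^{-1}$ is slightly more direct than the paper's route (which first shows $x_{\sigma(j_u)}^{-1}\cdots x_{\sigma(j_1)}^{-1}x_{j_1}\cdots x_{j_u}$ is independent of the $j$'s), but the underlying idea is the same.
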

\begin{proof}
Let $z$ be the smallest integer such that $zl > u$. Because of the
defining relations of $S_{n,l}(H)$, in its universal group for all
$\sigma \in H$ we have $x_{j_1}x_{j_2}\cdots x_{j_u} x_1^{zl-u}
= x_{\sigma(j_1)}x_{\sigma(j_2)}  \cdots
x_{\sigma(j_u)}x_{\sigma(1)}^{zl-u}$. Hence
$x_{\sigma(j_u)}^{-1}x_{\sigma(j_{u-1})}^{-1}  \cdots
x_{\sigma(j_1)}^{-1}x_{j_1}x_{j_2}\cdots x_{j_u} =
 x_{\sigma(1)}^{zl-u} x_1^{u-zl}$.
Therefore
\begin{equation*}
\begin{array}{r@{}l}
x_{\sigma(k_u)}^{-1}x_{\sigma(k_{u-1})}^{-1} \cdots
x_{\sigma(k_1)}^{-1} x_{k_1}x_{k_2}\cdots x_{k_u}
&{} = x_{\sigma(1)}^{zl-u}x_1^{u-zl} \vspace{3 pt} \\
&{} = x_{\sigma(j_u)}^{-1}x_{\sigma(j_{u-1})}^{-1}  \cdots x_{\sigma(j_1)}^{-1}x_{j_1}x_{j_2}\cdots x_{j_u},
\end{array}
\end{equation*}
 and hence
$$
 x_{j_1}x_{j_2}\cdots x_{j_u}  x_{k_u}^{-1}x_{k_{u-1}}^{-1}  \cdots x_{k_1}^{-1}  =
 x_{\sigma(j_1)}x_{\sigma(j_2)}  \cdots x_{\sigma(j_u)}x_{\sigma(k_u)}^{-1}x_{\sigma(k_{u-1})}^{-1}  \cdots
 x_{\sigma(k_1)}^{-1}.
$$
The other equality can be proved similarly.
\end{proof}
\begin{lemma}\label{rewrite2}
In $G_{n,l}(H)$ the following equalities hold:
\begin{equation*}
\hspace{-10 pt}
\begin{array}{r@{}l}
x_{\sigma_1(1)}x_{\sigma_2(1)} \cdots x_{\sigma_{l-1}(1)}x_{\sigma_l(j)}
&{}=x_1^{l-1}x_jx_1^{1-l}x_{\sigma_1^{-1}\sigma_2(1)}x_{\sigma_1^{-1}\sigma_3(1)}\cdots x_{\sigma_1^{-1}\sigma_l(1)},  \vspace{3 pt} \\
 x_{\sigma_1(1)}x_{\sigma_2(1)} \cdots x_{\sigma_{l-1}(1)}x^{-1}_{\sigma_l(j)}&{}=x_1^{l-1}x^{-1}_jx_1^{1-l}x_{\sigma_{l}\sigma_{l-1}^{-1}(1)}x_{\sigma_{l}\sigma_{l-1}^{-1}\sigma_{1}(1)}
 \cdots x_{\sigma_{l}\sigma_{l-1}^{-1}\sigma_{l-2}(1)},
\end{array}
\end{equation*}
where $j \in \{ 1, 2, \ldots , n \}$ and $\sigma_1,\dots
,\sigma_l\in H$.
\end{lemma}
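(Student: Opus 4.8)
The plan is to prove both equalities by rewriting words, using repeatedly the defining relations of $G_{n,l}(H)$ together with Lemma~\ref{rewrite1}; besides the defining relations themselves, the key extra input is the relation $x_1^l=x_{\sigma(1)}^l$ ($\sigma\in H$), which is the defining relation obtained by taking all indices equal to $1$. The two displayed equalities differ only in the sign of the final generator, so the second will follow from the same argument after one preliminary rewriting, and this is what accounts for the slightly different permutations appearing on its right-hand side.

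For the first equality I would argue as follows. Applying the defining relation with $\sigma=\sigma_l$ turns the left-hand word into $x_{\sigma_l^{-1}\sigma_1(1)}x_{\sigma_l^{-1}\sigma_2(1)}\cdots x_{\sigma_l^{-1}\sigma_{l-1}(1)}x_j$. Inserting $x_1x_1^{-1}$ in front of the final $x_j$ and applying the defining relations to the resulting block of $l$ generators, all of the form $x_{\tau(1)}$, gives $x_{\sigma_l^{-1}\sigma_1(1)}\cdots x_{\sigma_l^{-1}\sigma_{l-1}(1)}x_1=x_{\sigma_1(1)}x_{\sigma_2(1)}\cdots x_{\sigma_l(1)}=x_1\,x_{\sigma_1^{-1}\sigma_2(1)}\cdots x_{\sigma_1^{-1}\sigma_l(1)}$. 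Hence, writing $P=x_{\sigma_1^{-1}\sigma_2(1)}\cdots x_{\sigma_1^{-1}\sigma_l(1)}$, the left-hand side equals $x_1Px_1^{-1}x_j$, and it remains to verify the identity $x_1Px_1^{-1}x_j=x_1^{l-1}x_jx_1^{1-l}P$. Here one uses that $x_1P=x_{\sigma_1(1)}\cdots x_{\sigma_l(1)}$, so that the relations $x_{\sigma_k(1)}^l=x_1^l$ are available to shift powers of $x_1$ through the word, while Lemma~\ref{rewrite1} (chiefly its $u=1$ case) transports the generator $x_j$ to the left and restores $P$ on the right; once the surplus powers of $x_1$ are cancelled one obtains the required right-hand side.

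For the second equality the same scheme applies, except that the factor $x_{\sigma_l(j)}^{-1}$ forces a different first move: using the $u=1$ case of Lemma~\ref{rewrite1} one replaces $x_{\sigma_{l-1}(1)}x_{\sigma_l(j)}^{-1}$ by $x_1x_{\sigma_{l-1}^{-1}\sigma_l(j)}^{-1}$ before normalizing, and the composite permutation $\sigma_{l-1}^{-1}\sigma_l$ introduced here is exactly what produces the permutations $\sigma_l\sigma_{l-1}^{-1}\sigma_k$ on the right-hand side. I expect the main obstacle, in both cases, to be bookkeeping rather than any single idea: one has to keep track of the composite permutation labelling each generator after each application of a defining relation, and make sure that every stray power of $x_1$ produced while invoking Lemma~\ref{rewrite1} is eventually absorbed. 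No individual step is deep, but the computation is intricate.
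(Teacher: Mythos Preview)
Your opening reduction for the first equality is correct: applying the defining relation with $\sigma_l^{-1}$ and then inserting $x_1x_1^{-1}$ does give
\[
x_{\sigma_1(1)}\cdots x_{\sigma_{l-1}(1)}x_{\sigma_l(j)} \;=\; x_1\,P\,x_1^{-1}x_j,
\qquad P=x_{\sigma_1^{-1}\sigma_2(1)}\cdots x_{\sigma_1^{-1}\sigma_l(1)}.
\]
The difficulty is the remaining identity $x_1Px_1^{-1}x_j=x_1^{\,l-1}x_jx_1^{\,1-l}P$, and here your sketch is not a proof. The $u=1$ case of Lemma~\ref{rewrite1} only \emph{relabels} a pair $x_ax_b^{-1}$ (or $x_a^{-1}x_b$); it does not move $x_j$ past the block $P$, and the relation $x_{\sigma(1)}^{\,l}=x_1^{\,l}$ is of no help because no power $x_{\sigma(1)}^{\,l}$ ever appears in the word. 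In fact, if you try to push $x_1^{-1}x_j$ leftwards with the $u=1$ case you simply recover $x_1\,p_1\cdots p_{l-2}\,x_{\sigma_1^{-1}\sigma_l(j)}$, which is the original left-hand side with $\sigma_1$ replaced by the identity---so the ``remaining'' identity is exactly the lemma again for a normalized tuple, not an easier statement.

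What the paper does instead is an iterative scheme using Lemma~\ref{rewrite1} with $u=l-1$, not $u=1$. After writing the word as $x_1\bigl(x_{\sigma_1^{-1}\sigma_2(1)}\cdots x_{\sigma_1^{-1}\sigma_l(j)}\bigr)x_1^{1-l}\cdot x_1^{l-1}$, the bracketed block of $l-1$ positives followed by $l-1$ negatives is hit by Lemma~\ref{rewrite1} with $\sigma=\sigma_2^{-1}\sigma_1$, producing a leading $x_1$ that is absorbed to give $x_1^{2}(\cdots)$. Repeating $l-1$ times accumulates $x_1^{\,l-1}x_j$ on the left; a second pass of $l-1$ such applications then converts the accumulated inverse factors back into $x_1^{\,1-l}P$ on the right. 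This is the ``bookkeeping'' you allude to, but the point is that the mechanism is Lemma~\ref{rewrite1} at level $u=l-1$ applied $2(l-2)$ times, not the $u=1$ case.

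For the second equality your plan also diverges from the paper and inherits the same gap. The paper avoids redoing the computation: it simply applies the already-proved first equality to the tuple $(\tau_1,\ldots,\tau_l)=(\sigma_l\sigma_{l-1}^{-1},\,\sigma_l\sigma_{l-1}^{-1}\sigma_1,\ldots,\sigma_l\sigma_{l-1}^{-1}\sigma_{l-2},\,\sigma_l)$, for which $\tau_1^{-1}\tau_k=\sigma_{k-1}$, so the right-hand side becomes $x_1^{\,l-1}x_jx_1^{\,1-l}x_{\sigma_1(1)}\cdots x_{\sigma_{l-1}(1)}$; multiplying through by $x_{\sigma_l(j)}^{-1}$ and rearranging gives the second equality in one line. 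This is both shorter and cleaner than rerunning the rewriting with a negative last letter.
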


\begin{proof} By the defining relations of $G_{n,l}(H)$ and
Lemma~\ref{rewrite1}, we have
\begin{equation*}
\begin{array}{r@{}l}
&{} \hspace{-18 pt} x_{\sigma_1(1)}x_{\sigma_2(1)} \cdots x_{\sigma_{l-1}(1)}x_{\sigma_l(j)} \vspace{3 pt}  \\
 &{}= x_{1}x_{\sigma_1^{-1}\sigma_2(1)} \cdots x_{\sigma_1^{-1}\sigma_{l-1}(1)}x_{\sigma_1^{-1}\sigma_l(j)}x_1^{1-l}x_1^{l-1} \vspace{3 pt}  \\
 &{}= x_{1}^2x_{\sigma_2^{-1}\sigma_3(1)} \cdots x_{\sigma_2^{-1}\sigma_{l-1}(1)}x_{\sigma_2^{-1}\sigma_l(j)}x_{\sigma_2^{-1}\sigma_1(1)}^{1-l}x_1^{l-1} \vspace{3 pt}  \\
 &{}= x_{1}^3x_{\sigma_3^{-1}\sigma_4(1)} \cdots x_{\sigma_3^{-1}\sigma_{l-1}(1)}x_{\sigma_3^{-1}\sigma_l(j)}x_{\sigma_3^{-1}\sigma_1(1)}^{2-l}x_{\sigma_2^{-1}\sigma_1(1)}^{-1}x_1^{l-1}\\
&{} \hspace{6 pt} \vdots \\
&{}= x_{1}^{l-1}x_{\sigma_{l-1}^{-1}\sigma_l(j)}x_{\sigma_{l-1}^{-1}\sigma_1(1)}^{-2}x_{\sigma_{l-2}^{-1}\sigma_1(1)}^{-1}\cdots x_{\sigma_2^{-1}\sigma_1(1)}^{-1}x_1^{l-1} \vspace{3 pt}  \\
&{}= x_{1}^{l-1}x_{j}x_{\sigma_{l}^{-1}\sigma_1(1)}^{-1}x_{\sigma_{l-1}^{-1}\sigma_1(1)}^{-1}\cdots x_{\sigma_2^{-1}\sigma_1(1)}^{-1}x_1^{l-1} \vspace{3 pt}  \\
&{}= x_{1}^{l-1}x_{j}x_1^{-1}x_{\sigma^{-1}_{1}\sigma_l\sigma_{l-1}^{-1}\sigma_1(1)}^{-1}\cdots x_{\sigma_{1}^{-1}\sigma_l\sigma_2^{-1}\sigma_1(1)}^{-1}x_{\sigma_{1}^{-1}\sigma_l(1)}^{l-1} \vspace{3 pt}  \\
&{}=x_{1}^{l-1}x_{j}x_1^{-2}x_{\sigma^{-1}_{1}\sigma_{l-1}\sigma_{l-2}^{-1}\sigma_1(1)}^{-1}\cdots
x_{\sigma_{1}^{-1}\sigma_{l-1}\sigma_2^{-1}\sigma_1(1)}^{-1}x_{\sigma_{1}^{-1}\sigma_{l-1}(1)}^{l-2}
x_{\sigma_{1}^{-1}\sigma_l(1)}\\
&{} \hspace{6 pt} \vdots \\
 &{}=x_1^{l-1}x_jx_1^{1-l}x_{\sigma_1^{-1}\sigma_2(1)}x_{\sigma_1^{-1}\sigma_3(1)}\cdots
 x_{\sigma_1^{-1}\sigma_l(1)},
\end{array}
\end{equation*}
and the first equality is proved. Now  applying the first equality
for
$\sigma_{l}\sigma_{l-1}^{-1},\sigma_{l}\sigma_{l-1}^{-1}\sigma_{1},\ldots,
\sigma_{l}\sigma_{l-1}^{-1}\sigma_{l-2}, \sigma_l\in H$, we obtain
$$ x_{\sigma_{l}\sigma_{l-1}^{-1}(1)}x_{\sigma_{l}\sigma_{l-1}^{-1}\sigma_{1}(1)} \cdots x_{\sigma_{l}\sigma_{l-1}^{-1}\sigma_{l-2}(1)}x_{\sigma_{l}(j)}
=x_1^{l-1}x_jx_1^{1-l}x_{\sigma_{1}(1)}x_{\sigma_{2}(1)}
\cdots x_{\sigma_{l-1}(1)},$$ and thus the second equality follows.
\end{proof}
\begin{theorem}\label{formmmm}
 Assume that the different orbits under the action of $H$ are $X_{k
_1},X_{k_2}, \ldots , X_{k_r}$, where $X_k$ denotes the orbit of
$k$, and $k_1=1$. Then the  subgroup $F$ of $G_{n,l}(H)$
generated by $x_1, x_{k_2}, \ldots , x_{k_r}$ is free of rank $r$,
and for every $g\in G_{n,l}(H)$, there  exist $w \in F$ and
$\sigma_1, \sigma_2 , \ldots , \sigma_{l-1} \in H$ such that
$$ g = wx_{\sigma_1(1)} x_{\sigma_2(1)} \cdots x_{\sigma_{l-1}(1)} .$$
\end{theorem}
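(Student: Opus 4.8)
The plan is to first establish the normal form assertion — that every $g \in G_{n,l}(H)$ can be written as $g = w x_{\sigma_1(1)} x_{\sigma_2(1)} \cdots x_{\sigma_{l-1}(1)}$ with $w$ in the subgroup generated by $x_1, x_{k_2}, \ldots, x_{k_r}$ — and then separately argue that this subgroup is free of rank $r$. For the normal form, I would induct on the length of a word in the generators $x_i^{\pm 1}$ representing $g$. The key reduction tool is Lemma~\ref{rewrite2}: its first identity lets me take any product $x_{\sigma_1(1)} \cdots x_{\sigma_{l-1}(1)} x_{\sigma_l(j)}$ ending in an $l$-th factor and rewrite it as $x_1^{l-1} x_j x_1^{1-l}$ times a product of $l-1$ factors of the form $x_{\tau(1)}$; the second identity does the same when the last factor is inverted, producing $x_1^{l-1} x_j^{-1} x_1^{1-l}$ times such a tail. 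Since $x_j = x_{k_t} $ or lies in some orbit $X_{k_t}$, the factor $x_j$ itself is not yet in $F$ in general, so I will also need the first identity in the special case $j = 1$, i.e. the relations among the $x_{\sigma(1)}$ themselves, to move an arbitrary $x_{\sigma(i)}$ past the accumulated tail: writing $i = \sigma_0(k_t)$ for the appropriate $\sigma_0 \in H$ and orbit representative $k_t$, Lemma~\ref{rewrite2} converts $x_{\sigma_1(1)}\cdots x_{\sigma_{l-1}(1)} x_i$ into $x_1^{l-1} x_{k_t} x_1^{1-l}$ times a length-$(l-1)$ tail of $x_{\tau(1)}$'s (and similarly for $x_i^{-1}$).

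Concretely, I would argue: suppose inductively that a word of length $< m$ in $g$ has been brought to the claimed form $w \cdot x_{\sigma_1(1)} \cdots x_{\sigma_{l-1}(1)}$. Given a word of length $m$, strip off its last letter $x_i^{\pm 1}$, apply induction to the prefix to get $w \cdot x_{\sigma_1(1)} \cdots x_{\sigma_{l-1}(1)} \cdot x_i^{\pm 1}$, and then apply Lemma~\ref{rewrite2} to the last $l$ factors. This yields $w \cdot x_1^{l-1} x_{k_t}^{\pm 1} x_1^{1-l} \cdot x_{\tau_1(1)} \cdots x_{\tau_{l-1}(1)}$ for suitable $\tau_i \in H$; absorbing $w x_1^{l-1} x_{k_t}^{\pm 1} x_1^{1-l}$ into the new $w' \in F$ (note $x_1, x_{k_t} \in F$) gives exactly the desired form. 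The base cases $m = 0$ (take $w = 1$, all $\sigma_i = 1$) and $m \le l-1$ are handled by noting that a short word of the form $x_{i_1}^{\pm}\cdots$ can be padded, or directly that any single $x_i$ equals $x_1^{l-1} x_{k_t} x_1^{1-l}$ times a tail by the same lemma with a trivial prefix — here one should check the $u < l$ case of Lemma~\ref{rewrite1} is what makes the padding legitimate, but that is exactly what Lemma~\ref{rewrite1} provides.

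For freeness of $F = \langle x_1, x_{k_2}, \ldots, x_{k_r}\rangle$, the natural route is to exhibit a retraction or a representation. One clean approach: the monoid $S_{n,l}(H)$ maps onto the free abelian-type quotient, but more usefully, there is a length/degree homomorphism $G_{n,l}(H) \to \mathbb{Z}$ sending each $x_i \mapsto 1$, and one can try to build an action of $G$ on a tree or a free group. Alternatively — and I expect this is how the authors proceed — once the normal form is known, $|G : F|$ is finite (bounded by $|H|^{l-1}$, the number of possible tails $x_{\sigma_1(1)}\cdots x_{\sigma_{l-1}(1)}$), and $G$ is known from \cite{SymLengthL} to be (or one shows it is) torsion-free; a finitely generated torsion-free group with a free subgroup... no, that is not automatic. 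The cleaner statement: one shows directly that $F$ has no nontrivial relations by comparing with the case $H = 1$, where $G_{n,l}(1)$ is the free group on $x_1, \ldots, x_n$ — there is a homomorphism from $G_{n,l}(1)$'s relevant subgroup onto $F$, and one produces an inverse.

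The main obstacle I anticipate is precisely the freeness of $F$: the normal-form computation, while notationally heavy, is a routine induction powered entirely by Lemma~\ref{rewrite2}, but proving no hidden relations collapse $x_1, x_{k_2}, \ldots, x_{k_r}$ requires genuinely exhibiting a free-group quotient or a faithful action. The most promising concrete mechanism is to find a homomorphism $\varphi : G_{n,l}(H) \to F'$ onto a free group of rank $r$ with $\varphi(x_{k_i})$ equal to the free generators, so that no word in the $x_{k_i}$ can be trivial in $G$ unless it is trivial in $F'$; such a $\varphi$ can be constructed by sending $x_i$ to the generator indexed by the orbit of $i$ — one must check this respects the defining relations, which holds because applying $\sigma \in H$ permutes within orbits and hence does not change which free generator each factor maps to, so both sides of $x_{i_1}\cdots x_{i_l} = x_{\sigma(i_1)}\cdots x_{\sigma(i_l)}$ have the same image. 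That gives $\varphi|_F$ surjective onto a free group of rank $r$, forcing $F$ itself to be free of rank exactly $r$, and combined with the normal form (which bounds $|G:F| \le |H|^{l-1}$) completes the theorem.
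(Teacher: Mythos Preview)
Your proposal is correct and follows essentially the same route as the paper. For the normal form you induct on word length, peel off the last letter, and invoke Lemma~\ref{rewrite2} (with the last letter written as $x_{\sigma_0(k_t)}^{\pm 1}$ so that the output factor is $x_{k_t}^{\pm 1}\in F$) --- this is exactly the paper's inductive step. For freeness, your orbit-collapsing homomorphism $\varphi:G\to F_r$ sending each $x_i$ to the free generator indexed by its orbit is precisely the paper's quotient map $\pi:G\to \bar G=G/G_1$; the paper just phrases it as modding out by the normal closure of the elements $x_{k_j}x_{\sigma(k_j)}^{-1}$.

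One small slip to fix: in the base case $m=0$ you must take $w=x_1^{1-l}$ (so that $w\,x_1^{l-1}=1$), not $w=1$. Also, no separate treatment of $1\le m\le l-1$ is needed --- once $m=0$ is in hand, the inductive step already covers every $m\ge 1$.
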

\begin{proof} Let $G=G_{n,l}(H)$.
Let $G_1$ be the normal closure of $\gr(x_{k_j}x_{ \sigma(k_j)}^{-1} \mid k_j \in \{1, k_2, \ldots ,k_r \}, \sigma \in H)$ in $G$. We consider
$\bar{G}= G / G_1$. Then $\bar{G} = \gr(x_1, x_2 , \ldots , x_n \mid x_{i_1}x_{i_2} \cdots x_{i_l} = x_{\sigma(i_1)} x_{\sigma(i_2)}
\cdots x_{\sigma(i_l)}, \ x_{k_j}=x_{\sigma(k_j)}, \;
 \sigma \in H, \;   i_1,\ldots ,i_l \in  \{ 1, \ldots , n \} , \; k_j \in  \{ 1, k_2, \ldots , k_r \} )$. The second
relation in the presentation of $\bar G$ implies the first, and
$\bar{G} = \gr(X_{k_1}, X_{k_2}, \ldots , X_{k_r})$ is a free group
of rank $r$. Consider the natural epimorphism $\pi : G \to \bar{G}$.
If there was a non-trivial relation in $G$, of type $w=1$ with $w$ a
reduced word  in $\{ x_1^{\pm 1}, x_{k_2}^{\pm 1}, \ldots ,
x_{k_r}^{\pm 1} \}$, then we would have $\pi(w)=1$, a contradiction
because $\bar{G}$ is a free group. Thus the first part of the result
follows.

For the second part, we will use induction on the length  of the
reduced words in $\{ x_1^{\pm 1}, x_{2}^{\pm 1}, \ldots ,
x_{n}^{\pm 1} \}$ that represent an element $g\in G$. For length
$0$ we have $1=x_1^{1-l}x_1^{l-1}$ and for length $1$, according to
Lemma \ref{rewrite1} and the defining relations, we have
\begin{equation*}
\begin{array}{r@{}l}
x_{\sigma(k)}&{} =  x_{\sigma(k)}x_1^{-1}x_1^{2-l}x_1^{l-2}x_1=
x_{k}x_{\sigma^{-1}(1)}^{-1}x_1^{2-l}x_1^{l-2}x_1 \vspace{3 pt}  \\
&{} =x_{k}x_{1}^{-1}x^{2-l}_{\sigma(1)}x_{\sigma(1)}^{l-2}x_{\sigma(1)}
=x_{k}x_{1}^{-1}x^{2-l}_1x_1^{l-2}x_{\sigma(1)},
\end{array}
\end{equation*}
$$x_{\sigma(k)}^{-1}=x_{\sigma(k)}^{-1}x_1^{1-l}x_1^{l-1}= x_k^{-1}
x_{\sigma^{-1}(1)}^{1-l} x_1^{l-1}= x_k^{-1}
x_1^{1-l}x_{\sigma(1)}^{l-1}.$$ Let $s\geq 1$ and assume the result
is true for all $g\in G$ such that the length of some reduced word
in $\{ x_1^{\pm 1}, x_{2}^{\pm 1}, \ldots , x_{n}^{\pm 1} \}$
that represents $g$ is $s$. Let
$$ x_{\sigma_1(j_1)}^{\epsilon_1} x_{\sigma_2(j_2)}^{\epsilon_2} \cdots x_{\sigma _{s+1}(j_{s+1})}^{\epsilon_{s+1}},$$
be a reduced word with $\epsilon_u \in \{1, -1\}$,  $j_u \in \{1,
k_2 , \ldots , k_r \} $ and $\sigma _u\in H$, for $1 \leq u \leq
s+1$. By induction hypothesis there exist $w\in F$ and $\tau_1,\dots
,\tau_{l-1}\in H$ such that
\begin{eqnarray*}
x_{\sigma_1(j_1)}^{\epsilon_1} x_{\sigma_2(j_2)}^{\epsilon_2} \cdots
x_{\sigma _{s+1}(j_{s+1})}^{\epsilon_{s+1}}=wx_{\tau_1(1)}\cdots
x_{\tau_{l-1}(1)}x_{\sigma _{s+1}(j_{s+1})}^{\epsilon_{s+1}}.
\end{eqnarray*}
Now by Lemma~\ref{rewrite2}, the result follows.
\end{proof}

\section{The algebra $K[G_{n,l}(H)]$}\label{algebrass}

In this section we study the algebraic structure of the group
algebra $K[G_{n,l}(H)]$ over a field $K$, for any subgroup $H$ of
$\Sym_n$. We denote $G_{n,l}(H)$ by $G$.

\begin{theorem}
Let $F$ be the free subgroup of $G$ defined in the statement of
Theorem \ref{formmmm}. Then $[G:F]\leq |H|^{l-1}$ and
$(\mathcal{J}(K[G]))^{[G:F]}=0$. Furthermore, if the characteristic of $K$
does not divide $[G:F]$, then $\mathcal{J}(K[G])=0$.
\end{theorem}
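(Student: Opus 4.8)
The plan is to combine the structural description in Theorem~\ref{formmmm} with standard facts about the Jacobson radical of a group algebra of a group containing a free subgroup of finite index. First I would establish the index bound. By Theorem~\ref{formmmm} every $g\in G$ can be written as $g=wx_{\sigma_1(1)}\cdots x_{\sigma_{l-1}(1)}$ with $w\in F$; the element $x_{\sigma_1(1)}\cdots x_{\sigma_{l-1}(1)}$ depends only on the tuple $(\sigma_1,\dots,\sigma_{l-1})\in H^{l-1}$ (indeed only on the images $\sigma_i(1)$, which are even fewer), so the cosets $Fh$ as $h$ ranges over these elements exhaust $G/F$. Hence $[G:F]\le |H|^{l-1}$. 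Write $d=[G:F]$.

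Next I would handle the nilpotency of $\mathcal{J}(K[G])$. Since $F$ is free, $K[F]$ is semiprimitive (in fact $K[F]$ has no nonzero nil ideals; more strongly, group algebras of free groups are semiprimitive over any field, a classical result), so $\mathcal{J}(K[F])=0$. Now $F$ has finite index $d$ in $G$, and $K[G]$ is a free $K[F]$-module of rank $d$ on both sides, in particular a finitely generated module over the subring $K[F]$. I would invoke the standard relationship between the radical of a ring and that of a subring over which it is finitely generated: $\mathcal{J}(K[G])\cap K[F]$ is a nil (indeed nilpotent-on-finitely-generated-modules) ideal of $K[F]$, hence contained in $\mathcal{J}(K[F])=0$; equivalently one uses that $\mathcal{J}(K[G])$ is a nil ideal because every element, being integral of degree $\le d$ over the semiprimitive ring $K[F]$, must be nilpotent. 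Combining this with the fact that a nil ideal $N$ with $N^{d}\subseteq N\cap K[F]\cdot(\text{something})$... more cleanly: by a theorem of Passman (or by the Bergman--Isaacs / trace argument), if $F\trianglelefteq G$ has index $d$ then $\mathcal{J}(K[G])^{d}\subseteq K[G]\,\mathcal{J}(K[F])\,K[G]$; since the right-hand side is $0$ we get $\mathcal{J}(K[G])^{d}=0$. If $F$ is not normal in $G$ one first passes to its normal core $N=\bigcap_{g}F^{g}$, which is still free (subgroup of a free group) of finite index $d'$ dividing a power of $d$, and runs the same argument; but since we only need the bound $d=[G:F]$, I would instead cite directly the version valid for arbitrary finite-index subgroups.

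Finally, for semiprimitivity when $\ch K\nmid d$: here I would use the Maschke-type averaging argument. With $N$ the normal core of $F$ of index $m$ in $G$ (so $m$ divides $d!$, but in fact one checks $m$ still has its prime divisors among those of $d$, hence $\ch K\nmid m$), the algebra $K[G]$ is a crossed product, or more simply there is a projection: the relative trace map $\pi(a)=\frac{1}{m}\sum_{g\in G/N} g a g^{-1}$ is a $K[N]$-bimodule projection of $K[G]$ onto $K[N]$, and this forces $\mathcal{J}(K[G])\cap K[N]$-style control giving $\mathcal{J}(K[G])\subseteq K[G]\,\mathcal{J}(K[N])\,K[G]=0$ since $K[N]$ is semiprimitive. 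Alternatively, invoke Villamayor's theorem / the standard result that if $[G:N]$ is invertible in $K$ and $K[N]$ is semiprimitive then $K[G]$ is semiprimitive. The main obstacle I anticipate is the bookkeeping around normality: Theorem~\ref{formmmm} gives a free subgroup of finite index but not a normal one, so one must either descend to the normal core (and argue its index only involves primes dividing $[G:F]$, which needs a small lemma or a more careful choice) or cite the finite-index (not necessarily normal) versions of the radical-transfer theorems; assembling the precise references for those transfer results is the delicate point, whereas the index computation and the free-group semiprimitivity are routine.
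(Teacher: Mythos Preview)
Your overall strategy---bound $[G:F]$ via Theorem~\ref{formmmm}, use that $K[F]$ is semiprimitive because $F$ is free, and transfer this information to $K[G]$ through a finite-index radical theorem---is exactly what the paper does. The paper's proof is a one-line appeal to Villamayor's result as recorded in \cite[Theorem~7.2.7]{passman}: for any subgroup $F$ of finite index $d$ in $G$ (normality is not assumed) one has $\mathcal{J}(K[G])^{d}\subseteq \mathcal{J}(K[F])\,K[G]$, and if $d$ is invertible in $K$ then already $\mathcal{J}(K[G])=\mathcal{J}(K[F])\,K[G]$. Since $\mathcal{J}(K[F])=0$, both conclusions drop out immediately.

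Your detours are therefore unnecessary, and one of them contains a genuine error. In the semiprimitivity part you pass to the normal core $N$ of $F$ and assert that $m=[G:N]$ ``still has its prime divisors among those of $d$''. This is false in general: the action of $G$ on the cosets $G/F$ only gives $G/N\hookrightarrow \Sym_d$, so $m\mid d!$, and new primes can appear (for instance $d=4$ allows $3\mid m$). Thus the normal-core route does not by itself yield the hypothesis $\ch K\nmid m$, and your averaging/Maschke argument breaks down at that point. The integrality sketch in the nilpotency paragraph is also not a valid argument as written. The clean fix in both places is precisely what you flag at the end and what the paper does: cite the finite-index (not necessarily normal) version of the radical-transfer theorem directly.
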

\begin{proof}
By Theorem \ref{formmmm}, we know that  $[G:F]\leq |H|^{l-1}$. Then by
a result of Villamayor
\cite[Theorem~7.2.7]{passman},
$(\mathcal{J}(K[G]))^{[G:F]} \subseteq (\mathcal{J}(K[F]))\cdot K[G]$. Because $F$ is a
free group, $\mathcal{J}(K[F])=0$, and hence $(\mathcal{J}(K[G]))^{[G:F]}=0$. If the
characteristic of $K$ does not divide $[G:F]$, then by the same result
\cite[Theorem~7.2.7]{passman},
we have $\mathcal{J}(K[G])=
(\mathcal{J}(K[F]))\cdot K[G]= 0$.
\end{proof}

\begin{theorem}
\begin{itemize}
\item[(a)] If $H$ is transitive, then $K[G]$ is a Noetherian PI-algebra and $$ \GKdim (K[G])=1= \clKdim (K[G]).$$
\item[(b)] If $H$ is not transitive, then $K[G]$ has exponential growth.
\end{itemize}
\end{theorem}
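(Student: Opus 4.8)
The plan is to exploit the structural description from Theorem~\ref{formmmm}: every $g \in G$ can be written as $g = w\, x_{\sigma_1(1)} \cdots x_{\sigma_{l-1}(1)}$ with $w$ in the free subgroup $F = \gr(x_1, x_{k_2}, \ldots, x_{k_r})$ and $\sigma_i \in H$. For part (a), transitivity of $H$ means $r = 1$, so $F = \gr(x_1)$ is infinite cyclic, and $[G:F] \leq |H|^{l-1}$ is finite. Thus $G$ is a finite extension of $\mathbb{Z}$, hence finitely generated abelian-by-finite, and in particular $G$ is polycyclic-by-finite. Then $K[G]$ is Noetherian by the Hall–Jategaonkar–Roseblade theorem, and it satisfies a polynomial identity because $G$ has an abelian (indeed cyclic) subgroup of finite index (a group algebra of a group with an abelian subgroup of finite index $m$ satisfies the standard identity of degree $2m$, by a result in Passman's book). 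For the dimensions: $\GKdim(K[\mathbb{Z}]) = 1$, and GK-dimension is invariant under finite module extensions, so $\GKdim(K[G]) = \GKdim(K[F]) = 1$. Since $G$ is infinite, $K[G]$ is not a PI-domain-free finite-dimensional algebra, so $\clKdim(K[G]) \geq 1$; and for a Noetherian PI-algebra $\clKdim \leq \GKdim$ (this is the Bergman gap / a theorem of Small–Warfield type; for commutative-by-finite it is classical), giving $\clKdim(K[G]) = 1$.

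For part (b), when $H$ is not transitive we have $r \geq 2$, so $F$ contains a free group of rank $r \geq 2$; in particular $F$ contains a non-abelian free group $F_2 = \gr(a,b)$. The group algebra $K[F_2]$ has exponential growth because the free submonoid generated by $a$ and $b$ inside $F_2$ grows exponentially (there are $2^m$ distinct reduced words of length $m$). Concretely, I would fix the generating set $\{x_1^{\pm 1}, \ldots, x_n^{\pm 1}\}$ of $K[G]$ (or any finite generating set — growth type is independent of the choice), note that the elements $x_1, x_{k_2} \in F$ generate a free subgroup of $G$, so the $2^m$ distinct group elements that are reduced words of length $m$ in $x_1^{\pm 1}, x_{k_2}^{\pm 1}$ all have length $\leq 2m$ in the ambient generating set, hence $\dim_K V^{2m} \geq 2^m$ where $V$ is the span of $1$ and the generators. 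This forces exponential growth of $K[G]$.

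The main obstacle is part (a), specifically pinning down $\clKdim(K[G]) = 1$ and the precise PI degree. The inequality $\clKdim \leq \GKdim$ for affine Noetherian PI-algebras is the key input and should be cited rather than reproved; alternatively, since $G$ is abelian-by-finite one can pass to the finite-index normal abelian subgroup $A$, observe $K[A]$ is a commutative affine domain of Krull dimension $1$ (as $A \cong \mathbb{Z}$ up to finite index, but $A$ itself is free abelian of rank $1$ since it embeds in $G$ which is free-by-finite and torsion-free... here one must be slightly careful and may instead just take $A = F \cap (\text{core})$), and then use that $K[G]$ is a finite module over $K[A]$ so that going-up/going-down transfers the Krull dimension. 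The remaining routine points — Noetherianity, the PI property, and the GK-dimension computation — follow from standard results once the abelian-by-finite structure is in hand.
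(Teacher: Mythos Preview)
Your proposal is correct and follows essentially the same approach as the paper: both use Theorem~\ref{formmmm} to see that in the transitive case $K[G]$ is a finite module over $K[x_1,x_1^{-1}]$ (hence Noetherian, PI, of GK-dimension $1$), and in the non-transitive case $G$ contains a free subgroup of rank $\geq 2$. The only cosmetic difference is that the paper obtains $\clKdim(K[G])=\GKdim(K[G])$ by a single citation (Okni\'nski, Theorem~23.4), whereas you argue the two inequalities separately.
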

\begin{proof}
\begin{itemize}
\item[(a)]
If $H$ is transitive, by Theorem \ref{formmmm}, $K[G]$ is a finitely
generated module over $K[\gr(x_1)]=K[x_1,x_1^{-1}]$. Thus $K[G]$ is
a Noetherian PI-algebra and $\GKdim (K[G])=1$.  By
\cite[Theorem~23.4]{book-jan},
$ \clKdim (K[G])=\GKdim (K[G])=1$.
\item[(b)] If $H$ is not transitive, by Theorem~\ref{formmmm},  $G$ contains a free subgroup of rank $2$. Thus $K[G]$ has exponential growth.
\end{itemize}
\end{proof}

\section{The algebra $K[S_{n,l}(H)]$}\label{nilpotentt}
In this section, we obtain a generalization of \cite[Theorem
3.1]{SymLengthL}.

To do so, we recall Lemma 2.2 and Theorem 2.5  of \cite{SymLengthL}. The first part is the monoid version of Theorem~\ref{formmmm}.

\begin{proposition}\label{semi}
Let $H$ be a subgroup of $\Sym_n$. Then $S=S_{n,l}(H)$
contains a free submonoid $\FM$, with basis $\{ x_1, x_{k_2}, \ldots ,x_{k_r}\}$
(one element from each orbit on the generating set), and a finite
subset $T$ such that $S=\bigcup_{t\in T} \FM t =\bigcup_{t\in
T}t\FM$. If, furthermore, $H$ is abelian and semi-regular, then $S$
is a submonoid of its universal group $G=G_{n,l}(H)$.
\end{proposition}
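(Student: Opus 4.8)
I would treat the two assertions separately; the first is the promised monoid counterpart of Theorem~\ref{formmmm} and is self-contained, while the second is \cite[Theorems~2.3 and~2.5]{SymLengthL}.

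\emph{The free submonoid.} For $i\in\{1,\dots,n\}$ write $\kappa(i)\in\{1,k_2,\dots,k_r\}$ for the representative of the orbit $X_i$, and let $L$ be the free monoid on symbols $\{y_1,y_{k_2},\dots,y_{k_r}\}$. Since $i_u$ and $\sigma(i_u)$ lie in one orbit for every $\sigma\in H$, both sides of each defining relation $x_{i_1}\cdots x_{i_l}=x_{\sigma(i_1)}\cdots x_{\sigma(i_l)}$ are sent to $y_{\kappa(i_1)}\cdots y_{\kappa(i_l)}$, so $x_i\mapsto y_{\kappa(i)}$ extends to a monoid homomorphism $\phi\colon S\to L$. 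As $\phi(x_{k_j})=y_{k_j}$ and the $y_{k_j}$ freely generate $L$, the composite of the canonical homomorphism from the free monoid on $r$ letters into $S$ (sending the $j$-th letter to $x_{k_j}$) with $\phi$ is a bijection; hence that canonical homomorphism is injective, and $\FM:=\langle x_1,x_{k_2},\dots,x_{k_r}\rangle\subseteq S$ is free on the stated basis.

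\emph{The finite set $T$.} The basic rewriting move, immediate from the defining relations, is: in any length-$l$ factor $x_{a_1}x_{a_2}\cdots x_{a_l}$ of a word over $\{x_1,\dots,x_n\}$, choosing $\sigma\in H$ with $\sigma(a_1)=\kappa(a_1)$ rewrites it as $x_{\kappa(a_1)}x_{\sigma(a_2)}\cdots x_{\sigma(a_l)}$, i.e.\ it replaces the first letter by its orbit representative and applies $\sigma$ to the remaining $l-1$ letters; symmetrically one may normalise the last letter. Given $w\in S$ represented by a word of length $m$: if $m\le l-1$ there is nothing to do; if $m\ge l$, apply the first move to the windows of positions $\{1,\dots,l\},\{2,\dots,l+1\},\dots,\{m-l+1,\dots,m\}$ in turn, each step leaving the already-normalised prefix untouched, until positions $1,\dots,m-l+1$ all carry orbit representatives; this exhibits $w=uv$ with $u\in\FM$ and $v$ a word of length $l-1$. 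Hence, taking $T$ to be the (finite) set of all words over $\{x_1,\dots,x_n\}$ of length at most $l-1$, we get $S=\bigcup_{t\in T}\FM t$; normalising last letters from the right instead gives $S=\bigcup_{t\in T}t\FM$ with the same $T$.

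\emph{The embedding.} Now suppose $H$ is abelian and semi-regular; the task is to show that the canonical monoid map $\iota\colon S\to G$ is injective. The key point, and what I expect to be the main obstacle, is that $S$ is cancellative: this is exactly where \emph{both} hypotheses on $H$ are needed, and they are necessary, since cancellativity (hence the embedding) fails when $H$ is non-abelian or non-semi-regular. A natural route is to refine the retraction $\phi$. Semi-regularity attaches to each word $x_{c_1}\cdots x_{c_m}$ the unique ``twist'' $(\tau_1,\dots,\tau_m)\in H^m$ with $c_j=\tau_j(\kappa(c_j))$; a defining relation leaves the orbit-representative sequence $(\kappa(c_j))_j$ (that is, $\phi$) unchanged and left-multiplies a block of $l$ consecutive $\tau_j$'s by a common element of $H$. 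Since $H$ is abelian, the class of the twist in $H^m$ modulo these block moves is then an invariant of $s\in S$ refining $\phi(s)$, and one analyses the quotient of $H^m$ by the block moves — it is torsion-free, the relevant integer ``moving window'' matrix having a unimodular maximal minor — to obtain both the cancellativity of $S$ and, by matching this structure against the analogous description of $G$ coming from Lemma~\ref{rewrite1}, Lemma~\ref{rewrite2} and Theorem~\ref{formmmm}, the injectivity of $\iota$. The remaining technical difficulty is to make the comparison with $G$ precise: group-level rewriting may temporarily pass through longer, non-positive words, and it is the cancellativity of $S$ that allows one to keep everything inside $S$.
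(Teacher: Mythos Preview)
The paper does not prove this proposition at all: it is explicitly introduced as a recollection of Lemma~2.2 and Theorem~2.5 of \cite{SymLengthL}, so there is no in-paper argument to compare your proposal against. That said, your treatment of the two self-contained parts is correct. The retraction $\phi\colon S\to L$ onto the free monoid of orbit-representatives is well defined exactly because each defining relation preserves the orbit sequence, and it splits the inclusion of $\langle x_1,x_{k_2},\dots,x_{k_r}\rangle$, so that submonoid is free. Your sliding-window rewriting for the decomposition $S=\bigcup_{t\in T}\FM t=\bigcup_{t\in T}t\FM$ is also sound: normalising the leftmost letter of each successive length-$l$ window never disturbs the already normalised prefix, and after $m-l+1$ steps only the last $l-1$ letters remain unnormalised; the right-to-left version is symmetric. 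This is presumably the same argument as in the cited paper, since it is the natural one.

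For the embedding, you correctly flag that this is where both hypotheses on $H$ are used and that it is the substantive part. Your outline---semi-regularity gives a unique twist $(\tau_1,\dots,\tau_m)\in H^m$, a defining relation left-multiplies a length-$l$ block of twists by a common element, and for abelian $H$ the resulting quotient $H^m/\text{(block moves)}\cong H^{l-1}$ (via the unimodular maximal minor of the integer moving-window matrix) yields a complete invariant refining $\phi$---is the right skeleton, and it does lead to a normal form and to cancellativity. What remains genuinely incomplete, and you say so yourself, is the passage from cancellativity of $S$ to injectivity of $\iota\colon S\to G$: cancellativity alone does not imply embeddability in the universal group, so one still needs either an Ore-type argument or a direct comparison of normal forms in $S$ and in $G$. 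Since the paper simply cites \cite{SymLengthL} for this, your sketch is adequate as a pointer, but it should not be read as a proof.
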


Recall that a semigroup $P$ is said to be right reversible if $Pa\cap
Pb\neq\emptyset$ for all $a,b\in P$.

We need the following elementary technical lemma.

\begin{lemma}\label{ReversibleSub}
A right reversible submonoid of a free monoid is contained in
a cyclic group.
\end{lemma}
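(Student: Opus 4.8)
The plan is to exploit the rigidity of word equalities in a free monoid, together with the embedding of a free monoid into the free group on the same basis. Write $P$ for the given right reversible submonoid, regarded inside a free monoid $M$ on a set $X$, and hence inside the free group $F$ on $X$.

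First I would translate right reversibility into a combinatorial dichotomy. Given $a,b\in P$, reversibility provides $p,q\in P$ with $pa=qb$ (since $Pa\cap Pb\neq\emptyset$). As this is an equality of words in $M$, the shorter of $p,q$ is a prefix of the longer; cancelling it on the left (free monoids are left cancellative) yields $b=va$ or $a=vb$ for some $v\in M$. So for any two elements of $P$, one is a suffix of the other, the degenerate cases in which some of $a,b,p,q$ equals the identity being covered automatically.

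Next I would apply this dichotomy to the pair $ab,\,ba\in P$: these two words have the same length, so whichever one is a suffix of the other must in fact equal it, giving $ab=ba$. Hence $P$ is commutative. Finally, since $M$ embeds in the free group $F$, the subgroup $\gr(P)\leq F$ is an abelian subgroup of a free group, and is therefore cyclic; equivalently, one can stay inside $M$ and use the classical fact (Lyndon and Sch\"utzenberger) that two commuting words are powers of a common word, so all non-identity elements of $P$ share a single primitive root $\rho$ and $P\subseteq\gr(\rho)$.

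I do not expect a genuine obstacle here, consistent with this being an elementary technical lemma. The only points requiring care are the bookkeeping in the first step (the precise direction of the prefix relation and the trivial subcases where some factor is the identity) and, in the last step, invoking the standard result that two commuting elements of a free monoid are powers of a common word, equivalently that abelian subgroups of free groups are cyclic.
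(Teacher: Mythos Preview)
Your argument is correct, and it shares the paper's opening observation verbatim: from $pa=qb$ in a free monoid one of $a,b$ is a suffix of the other, and in particular any two elements of $P$ of the same length coincide. Where you diverge is in how you extract commutativity. The paper, for arbitrary $s,t\in P$, chooses exponents with $|s^{n}|=|t^{m}|$ to get $s^{n}=t^{m}$, then invokes Nielsen--Schreier to say $\langle s,t\rangle$ is free of rank at most two, forces rank one from the power relation, and only then obtains $st=ts$. Your shortcut---apply the equal-length observation directly to $ab$ and $ba$---reaches $ab=ba$ in one line and avoids the detour through subgroups of free groups; after that, both proofs finish identically by noting that an abelian subgroup of a free group is cyclic (or, as you say, by Lyndon--Sch\"utzenberger). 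Your route is strictly more economical; the paper's version has the minor side benefit of exhibiting the explicit relation $s^{n}=t^{m}$, but this is not used elsewhere.
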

\begin{proof}
Let $B$ be a right reversible submonoid of a free monoid
$\FM$. Let $|x|$ denote the length of  $x$ in a chosen basis $X$ of
the free monoid $\FM$. Let $s,t \in B$. Then $Bs\cap Bt\neq
\emptyset$. Thus $as=bt$ for some $a,b\in B$.
Because $\FM$ is free, it follows that if
    $|s|\geq |t|$,
then
   $s=vt$
for some $v\in \FM$.
In particular, if $x,y\in B$ and
  $|x|=|y|$, then $x=y$.

Clearly, there exist positive integers $n$ and $m$ such that
   $|s^n| =|t^m|$.
Since $s^n , t^m\in B$, it follows that $s^n=t^m$.

Let $\FG$ denote the free group with basis $X$ .
Consider the group
  $C=   \langle s, s^{-1}, t, t^{-1}\rangle \subseteq \FG  $.
Because a subgroup of a free group is free, we get that $C$ is a free group of rank at most two.
As $s^n=t^m$, the rank has to be one and thus $C$ is cyclic.
So, any two elements of $B$ commute and thus $B$ generates an abelian subgroup  in the  free group in $\FG$. So, this group is cyclic.
\end{proof}

Because of Theorem~\ref{formmmm},  $G=G_{n,l}(H)$ has a free
subgroup of finite index. Let $M$ be a normal subgroup of $G$ that
is of finite index and a free group.

\begin{theorem}\label{Non-Trans}
Let $H$ be an abelian and semi-regular subgroup of $\Sym_n$.  Let
$G=G_{n,l}(H)$ be the universal group of $S=S_{n,l}(H)$. Then,
$\mathcal{J}(K[S])$ is nilpotent of index at most $|G/M|^{2}$. If
$\ch (K) = 0$ or $\ch (K) = p$ and  $p \nmid |G/M|$, then
$\mathcal{J}(K[S])=0$ and $\mathcal{J}(K[G])=0$.
\end{theorem}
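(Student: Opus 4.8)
The plan is to leverage Proposition~\ref{semi}, which writes $S = \bigcup_{t \in T} \FM t$ for a free submonoid $\FM$ with basis $\{x_1, x_{k_2}, \ldots, x_{k_r}\}$ and a finite set $T$, together with the fact (from Theorem~\ref{formmmm} applied to $G$) that $G$ has a free normal subgroup $M$ of finite index. Since $S$ embeds in $G$ under the hypotheses on $H$, I would work inside $K[G]$ and exploit the $G/M$-grading: $K[G] = \bigoplus_{\bar g \in G/M} K[G]_{\bar g}$, where $K[G]_{\bar g}$ is the $K$-span of the coset $gM$. The key structural observation is that $S \cap M$ is a submonoid of the free group $M$, and I would want to show it is actually contained in a cyclic subgroup — this is where Lemma~\ref{ReversibleSub} enters. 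To apply that lemma I need $S \cap M$ to be right reversible, which should follow because $S$ itself is right reversible (any two elements of $S$ have a common left multiple, since $S$ embeds in a group $G$ that is, by Theorem~\ref{formmmm}, generated by $F$ and finitely many coset representatives — more concretely $S$ has a group of right fractions, or one argues directly from the normal form $g = wx_{\sigma_1(1)}\cdots x_{\sigma_{l-1}(1)}$). Once $S \cap M$ lies in a cyclic group, $K[S \cap M]$ is a commutative domain, hence semiprimitive, and in particular has zero Jacobson radical.

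Next I would push this local information up to $K[S]$ via the grading. The idea is that $K[S] = \bigoplus_{\bar g} K[S]_{\bar g}$ where $K[S]_{\bar g} = K[S] \cap K[G]_{\bar g}$, and $K[S]_{\bar e} = K[S \cap M]$ is a commutative domain. A standard argument for graded rings by a finite group: if $N = |G/M|$, then for any $a \in \mathcal{J}(K[S])$, writing $a = \sum_{\bar g} a_{\bar g}$, one shows that each homogeneous component, or rather suitable products of components, lie in the radical of the identity component $K[S \cap M]$, which is zero. More precisely, I expect the bound $\mathcal{J}(K[S])^{N^2} = 0$ to come from: $\mathcal{J}(K[S]) \cap K[S \cap M] \subseteq \mathcal{J}(K[S \cap M]) = 0$ (using that $K[S\cap M]$ is a nice enough subring, e.g. by a Villamayor-type or a direct reversibility argument that $K[S\cap M]$ is "closed" for the radical), and then that $\mathcal{J}(K[S])^N$ is contained in the identity component: indeed for a $G/M$-graded ring $R$ with identity component $R_e$, one has $(\mathcal{J}(R) \cap \text{something})$... the cleanest route is the result that in a ring graded by a finite group of order $N$, $\mathcal{J}(R)^N \subseteq R_e$ when $\mathcal{J}(R_e) = \mathcal{J}(R) \cap R_e$, or simply invoke that a product of $N$ homogeneous-degree-shifting elements returns to degree $e$. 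Combining, $\mathcal{J}(K[S])^{N^2} \subseteq (\mathcal{J}(K[S])^N \cap K[S\cap M])^N \subseteq \mathcal{J}(K[S\cap M])^N = 0$, giving nilpotency of index at most $|G/M|^2$.

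For the characteristic-zero (and $p \nmid |G/M|$) case, I would combine this with the result already proved in the previous section, or redo the Villamayor/Maschke argument: $K[S]$ sits inside $K[G]$, and by the theorem at the start of Section~\ref{algebrass}, $\mathcal{J}(K[G]) = 0$ whenever $\ch K \nmid [G:F]$; but here we have a better normal subgroup $M$, and $\mathcal{J}(K[M]) = 0$ since $M$ is free, so by Villamayor's theorem \cite[Theorem~7.2.7]{passman} $\mathcal{J}(K[G]) = \mathcal{J}(K[M]) \cdot K[G] = 0$ when $\ch K \nmid |G/M|$. Then $\mathcal{J}(K[S]) \subseteq \mathcal{J}(K[G]) \cap K[S]$: one needs that $K[S]$ is a "radical-closed" subring, which holds because $S$ satisfies the Ore-type condition making $K[G]$ a localization of $K[S]$ (right quotient ring), so $\mathcal{J}(K[S])K[G] \subseteq \mathcal{J}(K[G]) = 0$, forcing $\mathcal{J}(K[S]) = 0$. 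The main obstacle I anticipate is the bookkeeping in the graded-ring step — establishing cleanly that $\mathcal{J}(K[S]) \cap K[S \cap M]$ is contained in $\mathcal{J}(K[S \cap M])$ (this is the subtle point, as Jacobson radicals do not in general restrict well to subrings) and getting the exponent exactly $N^2$ rather than something weaker; I would handle the first by using that $K[S]$ is a free $K[S \cap M]$-module of finite-or-locally-finite rank via the coset decomposition, so that restriction of the radical behaves well, and the second by the degree-counting argument above.
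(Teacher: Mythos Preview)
Your argument has a genuine gap: the claim that $S$ (and hence $S\cap M$) is right reversible is false whenever $H$ is not transitive. By Proposition~\ref{semi} the monoid $S$ contains a free submonoid $\FM$ of rank $r$ equal to the number of $H$-orbits, and for $r\ge 2$ one has $Sx_1\cap Sx_{k_2}=\emptyset$, since every defining relation moves each letter only within its $H$-orbit, so the orbit of the last letter of a word is an invariant. Thus $S\cap M$ is not contained in a cyclic group (it contains large free submonoids), Lemma~\ref{ReversibleSub} does not apply to it, and $K[G]$ is \emph{not} a localization of $K[S]$. Both your route to $\mathcal{J}(K[S\cap M])=0$ and your Part~2 argument via $\mathcal{J}(K[S])\subseteq\mathcal{J}(K[G])\cap K[S]$ therefore collapse; the theorem is precisely meant to cover the non-transitive case, where this Ore-type reasoning is unavailable.

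The paper repairs both points without reversibility of $S$. For the nilpotency bound, there is no need for $S\cap M$ to be cyclic: as a submonoid of the free (hence orderable) group $M$, it is a two-unique-product monoid, so $\mathcal{J}(K[S\cap M])=0$ holds outright; then the Cohen--Montgomery machinery for rings graded by the finite group $G/M$ (together with \cite{Clase}) yields $\mathcal{J}(K[S])^{|G/M|^2}=0$, which also disposes of the bookkeeping you anticipated. For semiprimitivity in good characteristic, the paper invokes Okni\'nski's criterion \cite{alg-cancel-sg}: if $\mathcal{J}(K[S])\ne 0$ for a cancellative $S$, then there exists a left and right \emph{reversible} submonoid $P\subseteq S$ with $\mathcal{J}(K[P])\ne 0$. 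It is to $P\cap M$, not $S\cap M$, that one checks reversibility and applies Lemma~\ref{ReversibleSub}; this forces $P\cap M$ cyclic, whence $Q=PP^{-1}$ has a central cyclic subgroup of index dividing $|G/M|$, and standard group-ring results give $\mathcal{J}(K[P])=\mathcal{J}(K[Q])\cap K[P]=0$, a contradiction. The reversible monoid is thus produced by an external structure theorem rather than assumed of $S$ itself.
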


\begin{proof}
Obviously, one can consider $K[G]$ in a natural way as a $G/M$
graded ring. Its component of degree $e$ (the identity of $G/M$) is
$K[M]$.  By Proposition~\ref{semi}, $K[S]$ is a homogeneous
subring of $K[G]$ and hence it also is a $G/M$-graded ring, with
component of degree $e$ the subring $K[M\cap S]$. As a subsemigroup
of $M$, the semigroup $M\cap S$ is a two-unique product monoid and
thus $\mathcal{J}(K[S\cap M])=0$. Since $K[S]$ is graded by a finite
group, it then follows hat
$\mathcal{J}(K[S])^{m^2}=0$, where $m=|G/M|$  (this follows from
\cite{CohMont1,CohMont2} and Lemma 20 in \cite{Clase}). This proves
the first part of the theorem.

To prove the second part, assume $\ch (K) = 0$ or $\ch (K) = p$ and
$p \nmid m$. We prove the result by contradiction. So, assume
$\mathcal{J}(K[S])\neq 0$. Then,  by
\cite[Corollary~1]{alg-cancel-sg}, there exits a left and right
reversible submonoid $P$ of $S$ such that $\mathcal{J}(K[P])\neq 0$.

We claim that  $P\cap M$ also is a right reversible monoid.
Indeed, let $s,t\in P\cap M$. Then $Ps\cap Pt \neq \emptyset$ and thus
   $p_1 s =p_2 t$,
for some $p_1, p_2 \in P$. Hence, $(p_{2}^{m-1} p_1)s =
(p_{2}^{m})t$. Clearly, $p_{2}^{m}\in P\cap M$ and thus
$p_{2}^{m-1}p_1 \in M\cap P$.  Thus, $(P\cap M)s \cap (P\cap
M)t\neq \emptyset$. So, indeed,  $P\cap M$ is  right reversible.

By Lemma~\ref{ReversibleSub}, $P\cap M$ is contained in   a cyclic subgroup of $M$.
Let $Q=PP^{-1}$ be the group of fractions of $P$.
Then $Q\cap M$ is normal in $Q$ and $Q\cap M$ is a cyclic group.
Let $p\in P$. Clearly,  $p$ acts by conjugation on $Q\cap M$.
Since the latter is cyclic, this conjugation either is the trivial map or it maps an element to its inverse.
Consequently, if $f\in P\cap M$ and $p\in P$, then
    $pf=f^{-1}p$     or $pf =fp$.
By a degree argument,  the former is impossible. So $pf=fp$. Hence,
$P\cap M$ is central in $P$ and  thus $Q\cap M=(P\cap M)(P\cap
M)^{-1}$ is a central subgroup of $Q$ of finite index. Actually the
index is a divisor of $m$.

Hence $Q$ is a finite conjugacy group, with an infinite cyclic
central subgroup of finite index that is a divisor of $m$. By
\cite[Lemma 4.1.4]{passman} such a group has a finite normal
subgroup, say $N$, such that $Q/N$ is cyclic. Clearly, $N$ is
isomorphic to a subgroup of $G/M$ and thus $|N| $ is a divisor of
$m$. Hence, by \cite[Theorem 7.3.1]{passman},
$\mathcal{J}(K[Q])\subseteq \mathcal{J}(K[N])K[Q]$. Moreover, by
assumption, either $\ch (K)=0$ or $p \nmid m$. So, by Maschke's
Theorem,  $\mathcal{J}(K[Q])=0$. Further, by
\cite[Corollary~11.5]{book-jan}, $\mathcal{J}(K[P])
=\mathcal{J}(K[Q]) \cap K[P]$.  So $\mathcal{J}(K[P]) =0$, a
contradiction.
\end{proof}

\vspace{30pt}
 \noindent \begin{tabular}{llllllll}
 F. Ced\'o && E. Jespers  \\
 Departament de Matem\`atiques &&  Department of Mathematics \\
 Universitat Aut\`onoma de Barcelona &&  Vrije Universiteit Brussel  \\
08193 Bellaterra (Barcelona), Spain    && Pleinlaan
2, 1050 Brussel, Belgium \\
 cedo@mat.uab.cat && efjesper@vub.ac.be\\
   &&   \\
G. Klein &&  \\ Department of Mathematics &&
\\  Vrije Universiteit Brussel && \\
Pleinlaan 2, 1050 Brussel, Belgium &&\\ gklein@vub.ac.be&&
\end{tabular}

\begin{thebibliography}{99}
\itemsep=-2pt
\bibitem{SymLengthL} F. Ced\'o, E. Jespers and G. Klein,
Finitely presented monoids and algebras defined by permutation
relations of abelian type, II,
J. Pure App. Algebra 219 (2015), 1095--1102.
\bibitem{Clase}
M.  V. Clase and E.  Jespers,  On the Jacobson radical of semigroup graded rings. J. Algebra 169 (1994), no. 1, 79--97.
\bibitem{CohMont1}
M. Cohen and  S.  Montgomery,  Group-graded rings, smash products, and group actions. Trans. Amer. Math. Soc. 282 (1984), no. 1, 237--258.
\bibitem{CohMont2}
M. Cohen and S. Montgomery,  Addendum to: "Group-graded rings, smash products, and group actions''
[Trans. Amer. Math. Soc. 282 (1984), no. 1, 237--258; MR0728711 (85i:16002)]. Trans. Amer. Math. Soc. 300 (1987), no. 2, 810--811.
\bibitem{book-jan} J. Okni\'nski,
Semigroup Algebras, Marcel Dekker, 1991.
\bibitem{alg-cancel-sg} J. Okni\'nski, Algebras of cancellative semigroups,
Bull. Austral. Math. Soc. 49 (1994), 165--170.
\bibitem{passman} D.S. Passman, The Algebraic
Structure of Group Rings,   Wiley, New York,
1977.
\end{thebibliography}
\end{document}